\begin{document}

\newcommand{\mmbox}[1]{\mbox{${#1}$}}
\newcommand{\proj}[1]{\mmbox{{\mathbb P}^{#1}}}
\newcommand{\Cr}{C^r(\Delta)}
\newcommand{\CR}{C^r(\hat\Delta)}
\newcommand{\affine}[1]{\mmbox{{\mathbb A}^{#1}}}
\newcommand{\Ann}[1]{\mmbox{{\rm Ann}({#1})}}
\newcommand{\caps}[3]{\mmbox{{#1}_{#2} \cap \ldots \cap {#1}_{#3}}}
\newcommand{\Proj}{{\mathbb P}}
\newcommand{\N}{{\mathbb N}}
\newcommand{\Z}{{\mathbb Z}}
\newcommand{\R}{{\mathbb R}}
\newcommand{\A}{{\mathcal{A}}}
\newcommand{\Tor}{\mathop{\rm Tor}\nolimits}
\newcommand{\Ext}{\mathop{\rm Ext}\nolimits}
\newcommand{\Hom}{\mathop{\rm Hom}\nolimits}
\newcommand{\im}{\mathop{\rm Im}\nolimits}
\newcommand{\rank}{\mathop{\rm rank}\nolimits}
\newcommand{\supp}{\mathop{\rm supp}\nolimits}
\newcommand{\arrow}[1]{\stackrel{#1}{\longrightarrow}}
\newcommand{\CB}{Cayley-Bacharach}
\newcommand{\coker}{\mathop{\rm coker}\nolimits}
\sloppy
\theoremstyle{plain}

\newtheorem*{thm*}{Theorem}
\newtheorem{defn0}{Definition}[section]
\newtheorem{prop0}[defn0]{Proposition}
\newtheorem{quest0}[defn0]{Question}
\newtheorem{thm0}[defn0]{Theorem}
\newtheorem{lem0}[defn0]{Lemma}
\newtheorem{corollary0}[defn0]{Corollary}
\newtheorem{example0}[defn0]{Example}
\newtheorem{remark0}[defn0]{Remark}
\newtheorem{conj0}[defn0]{Conjecture}

\newenvironment{defn}{\begin{defn0}}{\end{defn0}}
\newenvironment{conj}{\begin{conj0}}{\end{conj0}}
\newenvironment{prop}{\begin{prop0}}{\end{prop0}}
\newenvironment{quest}{\begin{quest0}}{\end{quest0}}
\newenvironment{thm}{\begin{thm0}}{\end{thm0}}
\newenvironment{lem}{\begin{lem0}}{\end{lem0}}
\newenvironment{cor}{\begin{corollary0}}{\end{corollary0}}
\newenvironment{exm}{\begin{example0}\rm}{\end{example0}}
\newenvironment{rem}{\begin{remark0}\rm}{\end{remark0}}

\newcommand{\defref}[1]{Definition~\ref{#1}}
\newcommand{\conjref}[1]{Conjecture~\ref{#1}}
\newcommand{\propref}[1]{Proposition~\ref{#1}}
\newcommand{\thmref}[1]{Theorem~\ref{#1}}
\newcommand{\lemref}[1]{Lemma~\ref{#1}}
\newcommand{\corref}[1]{Corollary~\ref{#1}}
\newcommand{\exref}[1]{Example~\ref{#1}}
\newcommand{\secref}[1]{Section~\ref{#1}}
\newcommand{\remref}[1]{Remark~\ref{#1}}
\newcommand{\questref}[1]{Question~\ref{#1}}

\newcommand{\std}{Gr\"{o}bner}
\newcommand{\jq}{J_{Q}}

\title{On some ideals with linear free resolutions}
\author{\c{S}tefan O. Toh\v{a}neanu}

\subjclass[2010]{Primary 13D02; Secondary 14N20, 52C35, 94B27}. \keywords{linear free resolution, fold products, linear codes. \\
\indent Tohaneanu's address: Department of Mathematics, University of Idaho, Moscow, Idaho 83844-1103, USA, Email: tohaneanu@uidaho.edu.}

\begin{abstract}
Given $\Sigma\subset\mathbb K[x_1,\ldots,x_k]$, any finite collection of linear forms, some possibly proportional, and any $1\leq a\leq |\Sigma|$, it has been conjectured that $I_a(\Sigma)$, the ideal generated by all $a$-fold products of $\Sigma$, has linear graded free resolution. In this article we show the validity of this conjecture for two cases: the first one is when $a=d+1$ and $\Sigma$ is dual to the columns of a generating matrix of a linear code of minimum distance $d$; and the second one is when $k=3$ and $\Sigma$ defines a line arrangement in $\mathbb P^2$ (i.e., there are no proportional linear forms). For the second case we investigate what are the graded betti numbers of $I_a(\Sigma)$.
\end{abstract}

\maketitle

\section{Introduction}

Let $R:=\mathbb K[x_1,\ldots,x_k]$ be the ring of (homogeneous) polynomials with coefficients in a field $\mathbb K$, with the standard grading. Denote ${\frak m}:=\langle x_1,\ldots,x_k\rangle$ to be the irrelevant maximal ideal of $R$. Let $\ell_1,\ldots,\ell_n$ be linear forms in $R$, some possibly proportional, and denote this collection by $\Sigma=(\ell_1,\ldots,\ell_n)\subset R$; we need a notation to take into account the fact that some of these linear forms are proportional. For $\ell\in \Sigma$, by $\Sigma\setminus\{\ell\}$ we will understand the collection of linear forms of $\Sigma$ from which $\ell$ has been removed. Also, we denote $|\Sigma|=n$, and ${\rm rk}(\Sigma):={\rm ht}(\langle \ell_1,\ldots,\ell_n\rangle)$.

Let $1\leq a\leq n$ be an integer and define {\em the ideal generated by $a$-fold products of $\Sigma$} to be the ideal of $R$ $$I_a(\Sigma):=\langle \{\ell_{i_1}\cdots\ell_{i_a}|1\leq i_1<\cdots<i_a\leq n\}\rangle.$$ We also make the convention $I_0(\Sigma):=R$, and $I_b(\Sigma)=0$, for all $b>n$. Also, if $\Sigma=\emptyset$, $I_a(\Sigma)=0$, for any $a\geq 1$.

When the linear forms $\ell_1,\ldots,\ell_n$ define a hyperplane arrangement $\A\subset\mathbb P^{k-1}$ (so for all $i\neq j$, $\gcd(\ell_i,\ell_j)=1$), we will denote $\Sigma=\A=\{\ell_1,\ldots,\ell_n\}$.

\medskip

A homogeneous ideal $I\subset R$ generated in degree $d$ it is said to have {\em linear (minimal) graded free resolution}, if one has the graded free resolution $$0\rightarrow R^{n_{b+1}}(-(d+b))\rightarrow\cdots\rightarrow R^{n_2}(-(d+1))\rightarrow R^{n_1}(-d)\rightarrow R \rightarrow R/I\rightarrow 0,$$ for some positive integer $b$. The integers $n_j\geq 1$ are called the {\em betti numbers} of $R/I$, and since we have exactly one shift at each step in the resolution, they match the {\em graded betti numbers}. By convention, the zero ideal has linear graded free resolution. Also we say that $R/I$ has linear graded free resolution if and only if $I$ has linear graded free resolution.

\cite[Conjecture 1]{AnGaTo} states that for any collection of linear forms $\Sigma$, and any $1\leq a\leq|\Sigma|$, the ideals $I_a(\Sigma)$ have linear graded free resolution. In \cite{To3} is presented the current state of this conjecture, as well as new instances when the conjecture is true: Theorem 2.2 shows the validity for $k=2$, and Theorem 2.5 shows the validity for $a=|\Sigma|-2$.

In this article we check the validity of the conjecture when $a=d+1$, where $d$ is the minimum distance of the linear code dual to $\Sigma$; and for $k=3$, but when $\Sigma=\A=\{\ell_1,\ldots,\ell_n\}$.

The strategy to prove these two cases is the following.
\begin{itemize}
  \item The proof is by induction on pairs $(|\Sigma|, {\rm rk}(\Sigma))$.
  \item Often we appeal to the simple observation that if $\ell\in \Sigma$, then for all $a$, $I_a(\Sigma)=\ell\cdot I_{a-1}(\Sigma\setminus\{\ell\})+I_a(\Sigma\setminus\{\ell\})$.
  \item By knowing how $I_a(\Sigma)^{\rm sat}$ looks like, from inductive hypotheses coupled with Remark \ref{remark0} below, we are able to show that there exists an $\ell\in\Sigma$ such that $$I_a(\Sigma):\ell=I_{a-1}(\Sigma\setminus\{\ell\}).$$
  \item Consider the short exact sequence of graded $R$-modules:
$$0\longrightarrow \frac{R(-1)}{I_{a-1}(\Sigma\setminus\{\ell\})}\stackrel{\cdot\ell}\longrightarrow \frac{R}{I_a(\Sigma)}\longrightarrow \frac{R}{\langle\ell,I_a(\Sigma)\rangle}\longrightarrow 0.$$
  \item Most of the time due to inductive hypotheses, the leftmost and the rightmost nonzero modules have linear graded free resolutions, equivalently, by \cite[Theorem 1.2]{EiGo}, their Castelnuovo-Mumford regularities are both $\leq a-1$.
  \item Apply the inequalities of regularity under short exact sequence (see \cite[Corollary 20.19 b.]{Ei2}), to conclude that ${\rm reg}(R/I_a(\Sigma))\leq a-1$, and therefore that $R/I_a(\Sigma)$ has linear graded free resolution.
\end{itemize}     

\begin{rem}\label{remark0} Let $J\subset R$ be an ideal generated in degree $a$. Then $J\subseteq J^{\rm sat}\cap {\frak m}^a$.\footnote{If $I\subset R$ is a homogeneous ideal, by definition $I^{\rm sat}:=\{f\in R| \exists n(f)\geq 0 \mbox{ such that }{\frak m}^{n(f)}\cdot f\subset I\}$.} If $R/J$ has linear graded free resolution (equivalently, ${\rm reg}(R/J)=a-1$), since ${\rm H}_{\frak m}^0(R/J)=J^{\rm sat}/J$, by \cite[Theorem 4.3]{Ei}, we have $(J^{\rm sat}/J)_e=0, \mbox{ for any }e\geq a$. This means that $J^{\rm sat}\cap {\frak m}^a\subseteq J$, and therefore $$J=J^{\rm sat}\cap {\frak m}^a.$$
\end{rem}

In the last part we present an addition-deletion technique to find the graded betti numbers for $\mathbb K[x,y,z]/I_a(\A)$ for any $1\leq a\leq |\A|$, where $\A$ is a line arrangement in $\mathbb P^2$.

\section{The scheme of projective codewords of minimum weight}

A linear code $\mathcal C$ of dimension $k$, and length $n$, is the image of a $\mathbb K$-linear map $\mathbb K^k\longrightarrow \mathbb K^n$, given by a (generating) matrix $G$ of size $k\times n$; most of the time one supposes that $G$ has no zero column, and we suppose this as well. The minimum distance, $d_1(\mathcal C)$, is the minimum number of nonzero entries in a nonzero vector in $\mathcal C$. $\dim(\mathcal C)={\rm rk}(G)$, $n$, and $d_1(\mathcal C)$ are called the parameters of $\mathcal C$, and they are invariant under rescaling and permutation of the columns of $G$. The left-multiplication of $G$ by any $k\times k$ invertible matrix gives the same linear code $\mathcal C$. Because of these, one has the following duality:

To each column $(c_1,\ldots,c_k)^T$ of $G$ consider the dual linear form $c_1x_1+\cdots+c_kx_k$ in $R=\mathbb K[x_1,\ldots,x_k]$. Consequently, to $G$ we can associate a collection of linear forms $\Sigma=(\ell_1,\ldots,\ell_n)\subset R$. Also, this process is reversible: to any collection of linear forms we can associate a generating matrix of a linear code $\mathcal C$. The dimension of $\mathcal C$ equals the rank of the collection of linear forms dual to some (any) generating matrix of the code. Because of this duality, we will replace $\Sigma$ by its dual code $\mathcal C$.

Suppose ${\rm rk}(G)=k$. Generalizing the notion of minimum distance, for any $1\leq r\leq k$ one can define the $r$-th generalized Hamming weight, $d_r(\mathcal C)$, which by classical results in coding theory (see for example \cite[Corollary 1.3 and Proposition 1.7]{AnGaTo}) has the following description: $n-d_r(\mathcal C)$ is the maximum number of columns of $G$ that span a $k-r$ dimensional vector space. For example, $n-d_k(\mathcal C)=0$, since we assumed that $G$ has no zero columns. Or, $n-d_{k-1}(\mathcal C)$ is the maximum number of columns of $G$ that are proportional to each-other.

The $r$-th generalized Hamming weights help determine the heights of ideals generated by $a$-fold products of linear forms. From \cite[Proposition 2.2]{AnGaTo}, for $r=1,\ldots,k$, with the convention that $d_0(\mathcal C)=0$, for any $d_{r-1}(\mathcal C)<a\leq d_{r}(\mathcal C)$, one has
$${\rm ht}(I_a(\mathcal C))=k+1-r.$$ For example, if $1\leq a\leq d_1(\mathcal C)$, then ${\rm ht}(I_a(\mathcal C))=k$, the maximum possible value.

From \cite[Theorem 3.1]{To}, we have that for any $1\leq a\leq d_1(\mathcal C)$, $$I_a(\mathcal C)={\frak m}^a.$$ This has been the starting point of \cite[Conjecture 1]{AnGaTo}, since powers of the irrelevant ideal have linear graded free resolution (see for example \cite[Corollary 1.5]{EiGo}).

In this first part of the paper we will show that $I_{d_1(\mathcal C)+1}(\mathcal C)$ has also linear graded free resolution, generalizing one of the main results in \cite{AnTo}. As mentioned in there, such a result has very good applications in error-correcting messages, via saturation with respect with an extra variable.

\subsection{Projective codewords of minimum weight.} The weight of a vector ${\bf v}\in \mathbb K^n$, denoted ${\rm wt}({\bf v})$, is the number of its nonzero entries. So the product of any ${\rm wt}({\bf v})+1$ entries of ${\bf v}$ is zero, and there exist a product of ${\rm wt}({\bf v})$ entries of ${\bf v}$ that is nonzero.

Elements of $\mathcal C$ are called codewords, and a codeword of minimum weight is a codeword of weight $d_1(\mathcal C)$. A projective codeword of minimum weight is the equivalence class of a codeword of minimum weight, under multiplication by nonzero scalars. So we can think of a projective codeword of minimum weight as a point $[\beta_1,\ldots,\beta_n]\in\mathbb P^{n-1}$. Suppose ${\rm rk}(G)=k$. Then, each projective codeword of minimum weight corresponds uniquely to a point $[\alpha_1,\ldots,\alpha_k]\in\mathbb P^{k-1}$, via the left-multiplication $(\alpha_1,\ldots,\alpha_k)\cdot G=(\beta_1,\ldots,\beta_n)$.

These two observations put together give that the variety of projective codewords of minimum weight corresponds to $$V(I_{d_1(\mathcal C)+1}(\mathcal C))\subset\mathbb P^{k-1}.$$ We will abuse terminology by saying that the scheme of projective codewords of minimum weight is defined by $I_{d_1(\mathcal C)+1}(\mathcal C)$.

\begin{rem}\label{remark1} By \cite[Lemma 2.2]{To}, if $V(I_{d_1(\mathcal C)+1}(\mathcal C))=\{Q_1,\ldots,Q_m\}$, then
$$(I_{d_1(\mathcal C)+1}(\mathcal C))^{\rm sat}={\frak q_1}\cap\cdots\cap{\frak q_m},$$ where, for $1\leq i\leq m$, ${\frak q_i}\subset R$ is the linear prime ideal of the point $Q_i$.
\end{rem}

\begin{rem}\label{remark2} Let $\ell\in\Sigma$, and denote by $\mathcal C'$ the linear code dual to $\Sigma\setminus\{\ell\}$. Denote $d:=d_1(\mathcal C)$, and $d':=d_1(\mathcal C')$. Also denote by $G$, respectively $G'$, the corresponding dual generating matrices; $G'$ is the matrix obtained from $G$ by removing the column dual to $\ell$. The following classical results happen (see \cite{HuPl}):
\begin{itemize}
  \item[(i)] If $\dim(\mathcal C')=k-1$, then all the $n-1$ columns of $G'$ will span a $k-1$ dimensional vector space, so $d=1$.
  \item[(ii)] If $d\geq 2$, then $\dim(\mathcal C')=k$, and $d'$ equals $d$ or $d-1$.
  \item[(iii)] Let $d\geq 2$. Suppose $d'=d-1$, and suppose $\ell=\ell_n$ dual to the last column of $G$. Let ${\bf v'}=(\beta_1,\ldots,\beta_{d-1},0,\ldots,0)\in\mathbb K^{n-1}$ be a projective codeword of minimum weight of $\mathcal C'$, and suppose it corresponds to $\underbrace{(\alpha_1,\ldots,\alpha_k)}_{Q}\cdot G'={\bf v'}$. Obviously, ${\bf v}:=({\bf v'},\ell(Q))\in \mathbb K^n$ is a codeword of $\mathcal C$. If $\ell(Q)=0$, then ${\bf v}$ is a projective codeword of weight $d-1$ of $\mathcal C$; impossible. So $\ell(Q)\neq 0$, and ${\bf v}$ is a projective codeword of minimum weight (equal to $d$) of $\mathcal C$.
      So, if $d'=d-1$, then $$(I_{d'+1}(\mathcal C'))^{\rm sat}=\bigcap_{\ell\notin{\frak q_i}}{\frak q_i},$$ where $(I_{d+1}(\mathcal C))^{\rm sat}={\frak q_1}\cap\cdots\cap{\frak q_m}$.
\end{itemize}
\end{rem}

\begin{thm}\label{main1} Let $\Sigma$ be a collection of $n$ linear forms in $R:=\mathbb K[x_1,\ldots,x_k]$. Let $d$ be the minimum distance of the linear code dual to $\Sigma$. Then, $I_{d+1}(\Sigma)$ has linear graded free resolution.
\end{thm}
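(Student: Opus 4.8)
The plan is to carry out the six-step strategy sketched above by induction on the pair $(|\Sigma|,{\rm rk}(\Sigma))$, ordered so that decreasing $|\Sigma|$, or (for fixed $|\Sigma|$) decreasing ${\rm rk}(\Sigma)$, gives a smaller pair. First I would record two preliminary reductions. If ${\rm rk}(\Sigma)=r<k$, one changes coordinates so that every $\ell_i$ lies in $\mathbb K[x_1,\ldots,x_r]$, which alters neither $d$ nor the property of having a linear resolution, so one may assume ${\rm rk}(\Sigma)$ equals the number of variables. And if $\Sigma=\emptyset$, or ${\rm rk}(\Sigma)=1$ (so $d=n$), or more generally $d+1>n$, then $I_{d+1}(\Sigma)=0$ and there is nothing to prove; so assume $k={\rm rk}(\Sigma)\geq 2$, $d+1\leq n$, and note $d\geq 1$.

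Now set $a:=d+1$. By Remark \ref{remark1}, $I_a(\Sigma)^{\rm sat}={\frak q}_1\cap\cdots\cap{\frak q}_m$ with $m\geq 1$, where ${\frak q}_i$ is the linear prime of the projective minimum-weight codeword $Q_i$. Since the codeword through $Q_1$ has weight $d\geq 1$, some $\ell=\ell_j\in\Sigma$ has $\ell(Q_1)\neq 0$; fix this $\ell$, set $\Sigma':=\Sigma\setminus\{\ell\}$, and let $d':=d_1(\mathcal C')$, where $\mathcal C'$ is dual to $\Sigma'$. Deleting a column lowers the minimum distance by at most one, and the $Q_1$-codeword loses exactly one nonzero entry when its $\ell$-coordinate is dropped; hence $d'=d-1$ when $d\geq 2$, and then ${\rm rk}(\Sigma')=k$ by Remark \ref{remark2}(ii), so $I_d(\Sigma')=I_{d'+1}(\mathcal C')$ has a linear resolution by the inductive hypothesis (as $|\Sigma'|<|\Sigma|$) and $I_d(\Sigma')^{\rm sat}=\bigcap_{\ell(Q_i)\neq 0}{\frak q}_i$ by Remark \ref{remark2}(iii). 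When instead $d=1$, all forms of $\Sigma'$ vanish at $Q_1$, so after a coordinate change $I_d(\Sigma')=I_1(\Sigma')$ is the ideal generated by $k-1$ independent linear forms — hence has a linear resolution — and a direct inspection (using that $V(I_1(\Sigma')^{\rm sat})=\{Q_1\}$) gives $I_d(\Sigma')^{\rm sat}=I_1(\Sigma')=\bigcap_{\ell(Q_i)\neq 0}{\frak q}_i$. In both cases $I_d(\Sigma')=I_d(\Sigma')^{\rm sat}\cap{\frak m}^d$ by Remark \ref{remark0}.

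The key step is the identity $I_a(\Sigma):\ell=I_d(\Sigma')$, for which $\supseteq$ is immediate. Since $I_a(\Sigma)=\ell\, I_d(\Sigma')+I_a(\Sigma')$, one has $I_a(\Sigma):\ell=I_d(\Sigma')+\bigl(I_a(\Sigma'):\ell\bigr)$, so it suffices to take a nonzero homogeneous $f\in I_a(\Sigma'):\ell$ and show $f\in I_d(\Sigma')$ (the case $I_a(\Sigma')=0$ being trivial). As $I_a(\Sigma')$ is generated in degree $a=d+1$ and $\ell f\in I_a(\Sigma')$, we get $\deg f\geq d$, so $f\in{\frak m}^d$; and $\ell f\in I_a(\Sigma')\subseteq I_a(\Sigma)\subseteq\bigcap_i{\frak q}_i$, so for every $i$ with $\ell(Q_i)\neq 0$ the primality of ${\frak q}_i$ together with $\ell\notin{\frak q}_i$ forces $f\in{\frak q}_i$. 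Hence $f\in{\frak m}^d\cap\bigcap_{\ell(Q_i)\neq 0}{\frak q}_i=I_d(\Sigma')^{\rm sat}\cap{\frak m}^d=I_d(\Sigma')$, proving the claim.

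Finally I feed this into the short exact sequence
$$0\longrightarrow\frac{R(-1)}{I_d(\Sigma')}\stackrel{\cdot\ell}{\longrightarrow}\frac{R}{I_{d+1}(\Sigma)}\longrightarrow\frac{R}{\langle\ell,I_{d+1}(\Sigma)\rangle}\longrightarrow 0.$$
The left module has regularity $d=a-1$ by the previous paragraphs. For the right module, choose coordinates with $\ell=x_k$; then $\langle\ell,I_{d+1}(\Sigma)\rangle=\langle x_k,I_{d+1}(\Sigma')\rangle$, and reduction modulo $x_k$ identifies the right module with $\bar R/I_{d+1}(\Gamma)$, where $\bar R=\mathbb K[x_1,\ldots,x_{k-1}]$, $\Gamma$ is the collection of nonzero images of the forms of $\Sigma$, ${\rm rk}(\Gamma)=k-1$ and $|\Gamma|<|\Sigma|$ (and its $R$-regularity equals its $\bar R$-regularity). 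Every point of $\{x_k=0\}$ gives a nonzero codeword of $\mathcal C$ — there is no common zero, as ${\rm rk}(\Sigma)=k$ — of the same weight as the corresponding codeword of the code dual to $\Gamma$, so the latter has minimum distance $\delta\geq d$; therefore $I_{d+1}(\Gamma)$ is $I_{\delta+1}(\Gamma)$ when $\delta=d$, which is linear by the inductive hypothesis, and $\bar{\frak m}^{\,d+1}$ when $\delta\geq d+1$ by \cite[Theorem 3.1]{To}, which is linear by \cite[Corollary 1.5]{EiGo}; either way the right module has regularity $a-1$. Then \cite[Corollary 20.19]{Ei2} gives ${\rm reg}(R/I_{d+1}(\Sigma))\leq a-1$, so $I_{d+1}(\Sigma)$ has a linear graded free resolution by \cite[Theorem 1.2]{EiGo}. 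I expect the main obstacle to be keeping this induction self-contained: one must pin down $d'=d-1$ from the $Q_1$-codeword so that Remark \ref{remark2}(iii) applies and $I_d(\Sigma')^{\rm sat}$ is exactly $\bigcap_{\ell(Q_i)\neq 0}{\frak q}_i$, and check that the collapsed collection $\Gamma$ still has minimum distance $\geq d$ so that $I_{d+1}(\Gamma)$ falls into one of the two already-understood cases; the rank-dropping case $d=1$ must be inspected separately, but it is the easier one.
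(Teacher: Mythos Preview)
Your proof is correct and follows the paper's strategy closely: induction on $(|\Sigma|,{\rm rk}(\Sigma))$, establishing the colon identity $I_{d+1}(\Sigma):\ell=I_d(\Sigma')$ via the saturation descriptions in Remarks~\ref{remark0}--\ref{remark2}, and then reading off linearity from the short exact sequence and the regularity bounds. The one small difference is that you choose $\ell$ with $\ell(Q_1)\neq 0$ for a fixed minimum-weight point $Q_1$, which forces $d'=d-1$ whenever $d\geq 2$; the paper instead takes an arbitrary $\ell$ and treats the two subcases $d'=d$ and $d'=d-1$ separately. Your choice slightly streamlines the left-module analysis and, as a bonus, lets the induction absorb the rank-$2$ case directly rather than invoking \cite[Theorem~2.2]{To3} as an external base case.
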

\begin{proof} We will prove the result by induction on the pairs $(|\Sigma|, {\rm rk}(\Sigma))$, where $|\Sigma|\geq {\rm rk}(\Sigma)\geq 2$.

\noindent{\bf Base Cases.} If ${\rm rk}(\Sigma)=2$, after a change of variables we can suppose that $\Sigma\subset \mathbb K[x_1,x_2]$. \cite[Theorem 2.2]{To3} proves this case.

If $|\Sigma|={\rm rk}(\Sigma)$ (which we can suppose it equals $k\geq 2$), then after a change of variables we can assume that $\Sigma=(x_1,\ldots,x_k)$. In this case $d=1$. From results concerning star configurations (see part (3) in the Introduction of \cite{To3}), $I_a(\Sigma)$ has linear graded free resolution for any $1\leq a\leq k$.

\medskip

\noindent{\bf Induction Step.} Suppose $|\Sigma|>{\rm rk}(\Sigma)=k\geq 3$. Let $\mathcal C$ be the linear code dual to $\Sigma$. So the length of $\mathcal C$ is $n$, the dimension is $k$, and the minimum distance is $d\geq 1$. The inductive hypotheses say that if $\ell\in\mathcal C$, and if $\mathcal C':=\mathcal C\setminus\{\ell\}$ has minimum distance $d'$, then $I_{d'+1}(\mathcal C')$ has linear graded free resolution.

\noindent CLAIM 1: There exists $\ell\in\Sigma$ such that, with the notations above, $$I_{d+1}(\mathcal C):\ell=I_d(\mathcal C').$$

\begin{proof} Suppose $d=1$. Then, there exists a codeword of $\mathcal C$ with exactly one nonzero entry (say, the $n$-th entry). Since this is a vector in $\mathbb K^n$ which is a linear combination of the $k$ rows of $G$, we can replace the last row of $G$ by this vector, and after a change of variables we can assume $\ell_n=x_k$. So, after a left-multiplication of $G$ by an invertible matrix, we can assume that $\ell_1,\ldots,\ell_{n-1}$ are linear forms in variables $x_1,\ldots,x_{k-1}$, and $\ell_n=x_k$.

Take $\ell=\ell_n=x_k$. Let $f\in I_2(\mathcal C):x_k$. Since $I_2(\mathcal C)=x_kI_1(\mathcal C')+I_2(\mathcal C')$, we have $x_k(f-g)\in I_2(\mathcal C')$, for some $g\in I_1(\mathcal C')$. But from the change of variables above, $x_k$ is a nonzero divisor mod $I_2(\mathcal C')$, so $f-g\in I_2(\mathcal C')$. Since $I_2(\mathcal C')\subset I_1(\mathcal C')$, we have $I_2(\mathcal C):x_k\subset I_1(\mathcal C')$. The inclusion $\supseteq$ is obvious, so for $d=1$ the claim is shown.

\medskip

Suppose $d\geq 2$. Let $\ell$ be any element of $\Sigma$. From Remark \ref{remark2} (ii), $\dim(\mathcal C')=k$, and $d'=d$ or $d'=d-1$.

If $d'=d$, then $I_d(\mathcal C')={\frak m}^d$. Since $I_{d+1}(\mathcal C)\subset {\frak m}^{d+1}$, we have that $$I_{d+1}(\mathcal C):\ell\subset {\frak m}^{d+1}:\ell={\frak m}^d=I_d(\mathcal C').$$ Since the other inclusion is obvious, the claim is shown for this situation.

Suppose $d'=d-1$. We have that $I_{d+1}(\mathcal C)\subseteq (I_{d+1}(\mathcal C))^{\rm sat}\cap{\frak m}^{d+1}$. Therefore, from Remark \ref{remark1}, $$I_{d+1}(\mathcal C):\ell\subseteq ({\frak q_1}:\ell)\cap\cdots\cap({\frak q_m}:\ell)\cap({\frak m}^{d+1}:\ell).$$ If $\ell\in {\frak q_i}$, then ${\frak q_i}:\ell=R$, and if $\ell\notin {\frak q_j}$, then ${\frak q_j}:\ell={\frak q_j}$. Also, ${\frak m}^{d+1}:\ell={\frak m}^d$. All together, via Remark \ref{remark2} (iii), give $$I_{d+1}(\mathcal C):\ell\subseteq (I_{d'+1}(\mathcal C'))^{\rm sat}\cap {\frak m}^d.$$ But, by inductive hypotheses, $I_{d'+1}(\mathcal C')$ has linear graded free resolution, and therefore, by Remark \ref{remark0}, the last intersection above equals $I_{d'+1}(\mathcal C')$ itself. So in this case we obtained that $$I_{d+1}(\mathcal C):\ell\subseteq I_{d}(\mathcal C').$$ Since the other inclusion is obvious, we proved the claim for this remaining case as well.
\end{proof}

Let $\ell\in\Sigma$ be such that via CLAIM 1, $I_{d+1}(\mathcal C):\ell=I_d(\mathcal C')$. Suppose $\ell=\ell_n$.

As we explained in the Introduction, we have the short exact of graded $R$-modules

$$0\longrightarrow \frac{R(-1)}{I_d(\mathcal C')}\longrightarrow \frac{R}{I_{d+1}(\mathcal C)}\longrightarrow \frac{R}{\langle\ell, I_{d+1}(\mathcal C)\rangle}\longrightarrow 0.$$

First we deal with the leftmost nonzero module.
\begin{itemize}
  \item[(a)] If $d=1$, then $I_1(\mathcal C')=\langle\ell_1,\ldots,\ell_{n-1}\rangle$, which is a linear prime ideal, hence it has a linear graded free resolution.
  \item[(b)] If $d\geq 2$ (so $\dim(\mathcal C')=k$), and $d'=d$, then $I_d(\mathcal C')={\frak m}^d$, which has linear graded free resolution.
  \item[(c)] If $d\geq 2$ and $d'=d-1$, then $I_d(\mathcal C')=I_{d'+1}(\mathcal C')$, which by inductive hypotheses has linear graded free resolution.
\end{itemize} In conclusion, $${\rm reg}\left(\frac{R(-1)}{I_d(\mathcal C')}\right)=(d-1)+1=d.$$

\medskip

Now we deal with the rightmost nonzero module. We can suppose, after a change of variables, that $\ell=\ell_n=x_k$. Suppose that for some $m=0,\ldots,n-1$, after a permutation of the elements of $\Sigma$, and a rescaling, we have $\ell_{m+1}=\cdots=\ell_n=x_k$, and that for all $1\leq i\leq m$, $\ell_i=\bar{\ell_i}+c_ix_k, c_i\in\mathbb K$, where $\bar{\ell_i}$ are linear forms in variables $x_1,\ldots,x_{k-1}$.

Consider $\bar{\mathcal C}$ the linear code dual to $\bar{\Sigma}:=(\bar{\ell_1},\ldots,\bar{\ell_m})\subset \bar{R}:=\mathbb K[x_1,\ldots,x_{k-1}]$. Let $\bar{G}$ be the corresponding generating matrix. Since we assumed that ${\rm rk}(\Sigma)=k$, then ${\rm rk}(\bar{\Sigma})={\rm rk}(\bar{G})= \dim(\bar{\mathcal C})=k-1$. The length of $\bar{\mathcal C}$ is $m$. Let $\bar{d}:=d_1(\bar{\mathcal C})$ be the minimum distance of $\bar{\mathcal C}$.

We have that $u:=m-\bar{d}$ is the maximum number of columns of $\bar{G}$ that span an $(k-1)-1=k-2$ vector space. WLOG, suppose these are the first $u$ columns of $\bar{G}$. Then ${\rm ht}(\langle \bar{\ell_1},\ldots,\bar{\ell_u}\rangle)=k-2$. Since $x_k$ is a nonzero divisor mod $\langle x_1,\ldots,x_{k-1}\rangle$, we have ${\rm ht}(\langle\bar{\ell_1},\ldots,\bar{\ell_u}, x_k\rangle)=k-1$. But

$$\langle\bar{\ell_1},\ldots,\bar{\ell_u}, x_k\rangle=\langle\ell_1,\ldots,\ell_u, x_k\rangle=\langle\ell_1,\ldots,\ell_u, \ell_{m+1},\ldots,\ell_n\rangle.$$ So the first $u$ and the last $n-m$ columns of $G$ span a $k-1$ dimensional vector space. Therefore
$$n-d\geq u+n-m=n-\bar{d},$$ leading to $\bar{d}\geq d$.

\begin{itemize}
  \item[(1)] If $\bar{d}\geq d+1$, then $I_{d+1}(\bar{\mathcal C})=\langle x_1,\ldots,x_{k-1}\rangle^{d+1}$, which has a linear graded free resolution.
  \item[(2)] If $\bar{d}=d$, then $I_{d+1}(\bar{\mathcal C})=I_{\bar{d}+1}(\bar{\mathcal C})$, which by inductive hypotheses has a linear graded free resolution.
\end{itemize} To sum up we got $${\rm reg}\left(\frac{\bar{R}}{I_{d+1}(\bar{\mathcal C})}\right)=d.$$

But $R/\langle\ell,I_{d+1}(\mathcal C)\rangle$ and $\bar{R}/I_{d+1}(\bar{\mathcal C})$ are isomorphic as $R$-modules, so they have the same regularity (see \cite[Corollary 4.6]{Ei}).

\medskip

As we explained in the Introduction, we obtain that $I_{d+1}(\mathcal C)$ has linear graded free resolution.
\end{proof}

\section{Line arrangements in $\mathbb P^2$}

In this section we assume that $\Sigma=(\ell_1,\ldots,\ell_n)\subset R:=\mathbb K[x,y,z]$, with $\gcd(\ell_i,\ell_j)=1$, for $i\neq j$. So $\ell_1,\ldots,\ell_n$ define a line arrangement $\A\subset\mathbb P^2$; we will write $\Sigma=\A=\{\ell_1,\ldots,\ell_n\}\subset R$. Suppose ${\rm rk}(\A)=3$.

We will show that for any $1\leq a\leq n$, the ideal $I_a(\A)$ has linear graded free resolution. This is known to be true in various instances:
\begin{itemize}
  \item[(a)] If $a=n$, then $I_n(\A)=\langle\ell_1\cdots\ell_n\rangle$, which has linear graded free resolution.
  \item[(b)] If $a=n-1$, then, by \cite{Sc}, $0\longrightarrow R(-n)^{n-1}\longrightarrow R(-n+1)^n\longrightarrow I_{n-1}(\A)\longrightarrow 0$ is a linear graded free resolution.
  \item[(c)] If $a=n-2$, then \cite[Theorem 2.4]{To3} shows that $I_{n-2}(\A)$ has linear graded free resolution.
  \item[(d)] Let $\mathcal C$ be the linear code dual to $\A$. Let $d:=d_1(\mathcal C)$. Then, as we observed, $n-d$ is the maximum number of columns of the generating matrix $G$ that span a $3-1=2$ dimensional vector space. So $m:=n-d$ is the maximum number of concurrent lines of $\A$. If $1\leq a\leq n-m$, then $I_a(\A)=\langle x,y,z\rangle^a$, which has linear graded free resolution.
  \item[(e)] If $a=n-m+1$, by Theorem \ref{main1}, $I_{n-m+1}(\A)$ has linear graded free resolution.
  \item[(f)] If $\A$ is generic (i.e., if any three linear forms of $\A$ are linearly independent), then for any $1\leq a\leq n$, $I_a(\A)$ has linear graded free resolution. This is true from parts (a), (b), and (d) above, since $m=2$.
\end{itemize}

Suppose $Sing(\A):=\{P_1,\ldots,P_s\}\subset\mathbb P^2$ is the set of all the intersection points of the lines of $\A$. For $i=1,\ldots,s$, let ${\frak p_i}\subset R$ be the defining ideal of $P_i$; also, let $n_i=n(P_i)$ be the number of lines of $\A$ that intersect at the point $P_i$. Note that $m=\max\{n_1,\ldots,n_s\}$, and $n_i\geq 2$ for all $i=1,\ldots,s$. If $\A$ is generic, then $n_i=2$ for all $i=1,\ldots,s$.

\begin{lem}\label{lemma2} For any $1\leq b\leq m-1$ we have
$$(I_{n-b}(\A))^{\rm sat}={\frak p_1}^{n_1-b}\cap\cdots\cap {\frak p_s}^{n_s-b},$$ where if $n_j-b\leq 0$, then ${\frak p_j}^{n_j-b}$ is by convention equal to $R$.
\end{lem}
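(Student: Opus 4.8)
The plan is to prove both inclusions in the equality
$(I_{n-b}(\A))^{\rm sat}={\frak p}_1^{n_1-b}\cap\cdots\cap {\frak p}_s^{n_s-b}$,
working one prime $\frak{p}_i$ at a time, i.e.\ showing that the $\frak{p}_i$-primary component of $I_{n-b}(\A)$ is exactly $\frak{p}_i^{n_i-b}$ (and that there are no embedded components). Since $I_{n-b}(\A)$ is generated in degree $n-b$ and every $(n-b)$-fold product of the $\ell_j$'s vanishes to high order at each singular point, the containment $\subseteq$ should follow from a local multiplicity count: at the point $P_i$, exactly $n_i$ of the linear forms vanish, so any product of $n-b$ of them is divisible by at least $n-b-(n-n_i)=n_i-b$ of the forms through $P_i$, hence lies in $\frak{p}_i^{n_i-b}$ (when $n_i-b>0$; otherwise the condition is vacuous). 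Thus $I_{n-b}(\A)\subseteq \bigcap_i \frak{p}_i^{n_i-b}$, and since the right-hand side is saturated (it is an intersection of symbolic/ordinary powers of points, which is saturated), we get $(I_{n-b}(\A))^{\rm sat}\subseteq \bigcap_i \frak{p}_i^{n_i-b}$.

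For the reverse inclusion, the natural approach is induction on $n$ (or on the pair $(|\A|,{\rm rk}(\A))$ as in Theorem~\ref{main1}), using the deletion formula $I_{n-b}(\A)=\ell\cdot I_{n-b-1}(\A\setminus\{\ell\})+I_{n-b}(\A\setminus\{\ell\})$. Pick a line $\ell=\ell_n$; removing it drops the multiplicities $n_i$ at the singular points lying on $\ell$ by one and leaves the others unchanged, and $n$ becomes $n-1$, while the relevant index $n-b$ becomes $(n-1)-(b-1)$ on the first summand and $(n-1)-b$ on the second. Comparing the claimed saturations for $\A$ and $\A\setminus\{\ell\}$, one should be able to show that the two summands already account for $\bigcap_i \frak{p}_i^{n_i-b}$ after saturating: an element $f$ of the right-hand intersection of degree $\geq$ the generating degree can be split using $\ell$ because $f$ vanishes to order $n_i-b$ at each $P_i$ on $\ell$, so $f/\ell$ (or rather $f$ minus a correction) vanishes to the needed orders for $\A\setminus\{\ell\}$; the inductive hypothesis then places the pieces in the ideals, and Remark~\ref{remark0} lets us pass between the ideal and its saturation provided we already know (from the results quoted in (a)--(f), and an induction) that the relevant $I_a$ have linear resolution and hence agree with $I_a^{\rm sat}\cap\frak{m}^a$.

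An alternative, possibly cleaner, route for $\subseteq$-to-equality is to compare Hilbert functions: compute the Hilbert polynomial of $R/\bigcap_i \frak{p}_i^{n_i-b}$ (a sum of $\binom{n_i-b+1}{2}$ over the $i$ with $n_i>b$, since each $\frak{p}_i^{n_i-b}$ is a fat point of that colength), and separately argue that $R/I_{n-b}(\A)$ has the same Hilbert polynomial, whence the inclusion $I_{n-b}(\A)\subseteq\bigcap_i\frak{p}_i^{n_i-b}$ of ideals with equal Hilbert polynomial forces equality of saturations. The input "$R/I_{n-b}(\A)$ has the right Hilbert polynomial'' is essentially the statement that $V(I_{n-b}(\A))$ is, scheme-theoretically away from embedded stuff, the union of fat points $(n_i-b)P_i$; this is where one may want to invoke the interpretation of $I_a(\A)$ via generalized Hamming weights / the geometry of $a$-fold products already developed, or cite the analogue of Remark~\ref{remark1} in this setting.

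The main obstacle I expect is the reverse inclusion at the level of ideals rather than saturations — i.e.\ controlling embedded components and making sure the fat-point intersection is \emph{exactly} the saturation, not something larger. Concretely, the delicate point is showing that a form vanishing to order $n_i-b$ at each singular point can be written as an $(n-b)$-fold-product combination after multiplying by a high enough power of $\frak{m}$; the deletion induction handles the ``generic'' behaviour, but one must check the base cases (small $n$, or $b=m-1$ where several of the exponents $n_j-b$ collapse to $\le 0$) and verify that when we strip off a line $\ell$, no singular point's multiplicity drops below the threshold in a way that breaks the inductive bookkeeping. Handling the points of multiplicity exactly $b$ or $b+1$ carefully — where $\frak{p}_j^{n_j-b}$ is $R$ or $\frak{p}_j$ — is the fiddly part, but it is bookkeeping rather than a conceptual gap.
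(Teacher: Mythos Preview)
Your inclusion $\subseteq$ is fine and your multiplicity count is exactly right. But the paper does not prove the reverse inclusion from scratch: the whole lemma is obtained as an immediate consequence of \cite[Proposition 2.3]{AnGaTo}, which already gives the primary decomposition of $I_a(\Sigma)$. The only thing the paper verifies is that ${\rm ht}(I_{n-b}(\A))=2$, which it gets from $d_1(\mathcal C)=n-m<n-b\leq n-1=d_2(\mathcal C)$ and the height formula recalled in Section~2; then the cited proposition identifies the $\frak p_i$-primary part as $\frak p_i^{(n-b)-n+n_i}=\frak p_i^{n_i-b}$.

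Your route (a) has a genuine structural problem: you want to feed the deletion induction with the fact that the relevant $I_{a'}(\A')$ have linear resolution so that Remark~\ref{remark0} applies, but for general $a'$ that is precisely Theorem~\ref{main2}, whose proof \emph{uses} the present lemma. As written this is circular; it could only be salvaged by merging Lemma~\ref{lemma2} and Theorem~\ref{main2} into a single simultaneous induction on $|\A|$, which is more than the paper needs. Your route (b) defers the hard step (``$R/I_{n-b}(\A)$ has the right Hilbert polynomial'') to a citation, which is in spirit what the paper does, but less directly. If you want an honest self-contained argument for $\supseteq$, localize at $\frak p_i$: the $n-n_i$ forms missing $P_i$ become units, and $(I_{n-b}(\A))_{\frak p_i}$ is generated by $(n_i-b)$-fold products of $n_i$ pairwise non-proportional linear forms in the two-dimensional regular local ring $R_{\frak p_i}$; since $n_i-b\leq n_i-1$, the two-variable result (\cite[Theorem 3.1]{To}) gives $(I_{n-b}(\A))_{\frak p_i}=(\frak p_i^{\,n_i-b})_{\frak p_i}$, and hence the $\frak p_i$-primary component is exactly $\frak p_i^{\,n_i-b}$. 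This is presumably what underlies the cited proposition, and it avoids both induction and any appeal to linear resolutions.
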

\begin{proof} Since $1\leq b\leq m-1$, then $n-m+1\leq n-b\leq n-1$. If $\mathcal C$ is the linear code dual to $\A$, we have $d_1(\mathcal C)=n-m$, and $d_2(\mathcal C)=n-1$. So $d_1(\mathcal C)<n-b\leq d_2(\mathcal C)$, which leads to ${\rm ht}(I_{n-b}(\A))=2$. Since for all $j=1,\ldots,s$, $(n-b)-n+n_j=n_j-b$, and since $I_{n-b}(\A)=(I_{n-b}(\A))^{\rm sat}\cap J$, where $J$ is an ideal which is $\langle x,y,z\rangle$-primary, \cite[Proposition 2.3]{AnGaTo} proves the result.
\end{proof}

\begin{thm}\label{main2} Let $\A=\{\ell_1,\ldots,\ell_n\}\subset R:=\mathbb K[x,y,z]$ be a line arrangement of rank 3. Then, for any $1\leq a\leq n$, the ideal $I_a(\A)$ has linear graded free resolution.
\end{thm}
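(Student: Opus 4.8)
The plan is to run the induction on $n=|\A|$ announced in the Introduction, with ${\rm rk}(\A)=3$ fixed; the inductive statement is that $I_a(\A)$ has linear graded free resolution for every rank-$3$ arrangement of $n$ lines and every $1\le a\le n$. For $n\le 4$ one has $m\le 3$, and then every $a$ is already covered by items (a)--(f) above (item (f) taking care of the generic case), so these form the base of the induction. Assuming the statement for $n-1$ lines, let $\A$ have $n\ge 5$ lines. Items (a)--(f) dispatch $a\in\{1,\ldots,n-m+1\}\cup\{n-2,n-1,n\}$ and the generic case, so the only values left are those with $n-m+2\le a\le n-3$, i.e.\ $a=n-b$ with $3\le b\le m-2$ (which forces $m\ge 5$). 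Fix such an $a$.

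Since $n\ge 5$, at most one line $\ell$ of $\A$ makes $\A\setminus\{\ell\}$ a pencil (two distinct such lines would force the $\ge 3$ other lines of $\A$ to all meet in a point lying on them, hence ${\rm rk}(\A)<3$), so I may choose $\ell\in\A$ with $\A':=\A\setminus\{\ell\}$ again of rank $3$; set $n'=n-1$ and let $m'$ be the top multiplicity of $\A'$, so $m-1\le m'\le m$. The key step is to prove
$$I_a(\A):\ell=I_{a-1}(\A').$$
The inclusion $\supseteq$ is immediate from $\ell\cdot I_{a-1}(\A')\subseteq I_a(\A)$. For $\subseteq$: by Remark~\ref{remark0}, $I_a(\A)\subseteq (I_a(\A))^{\rm sat}\cap{\frak m}^a$, so $I_a(\A):\ell\subseteq\bigl((I_a(\A))^{\rm sat}:\ell\bigr)\cap{\frak m}^{a-1}$. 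The bound $3\le b\le m-2$ places $a$ in the range of Lemma~\ref{lemma2} for $\A$, and $b\le m-2\le m'-1$ places $a-1=n'-b$ in its range for $\A'$. Applying Lemma~\ref{lemma2} on both sides, using ${\frak p_i}^t:\ell={\frak p_i}^{t-1}$ when $P_i\in\ell$ and ${\frak p_i}^t:\ell={\frak p_i}^t$ when $P_i\notin\ell$, together with $Sing(\A')\subseteq Sing(\A)$ and the multiplicity bookkeeping $n_i'=n_i$ for $P_i\notin\ell$, $n_i'=n_i-1$ for $P_i\in\ell$ (a point on $\ell$ with $n_i=2$ disappearing from $Sing(\A')$), a factor-by-factor comparison yields $(I_a(\A))^{\rm sat}:\ell=(I_{a-1}(\A'))^{\rm sat}$. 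By the inductive hypothesis $I_{a-1}(\A')$ has linear graded free resolution (note $2\le a-1\le n-1$), so Remark~\ref{remark0} gives $(I_{a-1}(\A'))^{\rm sat}\cap{\frak m}^{a-1}=I_{a-1}(\A')$, which finishes the claim.

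With the claim established, I use the short exact sequence of graded $R$-modules
$$0\longrightarrow\frac{R(-1)}{I_{a-1}(\A')}\stackrel{\cdot\ell}\longrightarrow\frac{R}{I_a(\A)}\longrightarrow\frac{R}{\langle\ell,I_a(\A)\rangle}\longrightarrow 0.$$
The leftmost module has regularity $(a-2)+1=a-1$, since $I_{a-1}(\A')$ has linear graded free resolution and is generated in degree $a-1$. After a linear change of coordinates taking $\ell$ to a variable, the rightmost module is isomorphic, as a module, to $\bar R/I_a(\bar\A)$, where $\bar R$ is a polynomial ring in two variables and $\bar\A$ is the collection of $n-1$ nonzero (possibly proportional) linear forms obtained by restricting the lines of $\A'$ to $\ell$; by \cite[Theorem 2.2]{To3}, $I_a(\bar\A)$ has linear graded free resolution, and since it is generated in degree $a$ and nonzero (as $n-1>a$) the rightmost module has regularity $a-1$ as well (regularity being independent of the ambient polynomial ring, \cite[Corollary 4.6]{Ei}). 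Then \cite[Corollary 20.19 b.]{Ei2} gives ${\rm reg}(R/I_a(\A))\le a-1$; since $I_a(\A)$ is generated in degree $a$ this is an equality, and by \cite[Theorem 1.2]{EiGo} the ideal $I_a(\A)$ has linear graded free resolution.

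I expect the main obstacle to be the factor-by-factor comparison of $(I_a(\A))^{\rm sat}:\ell$ with $(I_{a-1}(\A'))^{\rm sat}$ inside the claim: one must verify that each type of singular point of $\A$ --- on $\ell$ or off $\ell$, with $n_i$ large, with $n_i=b+1$, or with $n_i=2$ --- contributes the identical ${\frak p_i}$-primary factor on both sides, the degenerate exponents being absorbed by the convention ${\frak p_i}^{\le 0}=R$. This is exactly where the restriction to $3\le b\le m-2$ is used: it guarantees that Lemma~\ref{lemma2} is available for both $\A$ and $\A'$ and keeps all exponents in the safe range, the remaining values of $a$ being precisely those already handled by Theorem~\ref{main1} together with the cited results for $a\in\{n-2,n-1,n\}$.
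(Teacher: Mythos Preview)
Your proof is correct and follows essentially the same route as the paper: establish the colon identity $I_a(\A):\ell=I_{a-1}(\A')$ by combining Lemma~\ref{lemma2} with Remark~\ref{remark0} and the inductive hypothesis, then run the short exact sequence, handling the rightmost quotient via \cite[Theorem~2.2]{To3}. The only differences are organizational: you invoke items (a)--(f) to trim the range to $3\le b\le m-2$ and you select $\ell$ so that ${\rm rk}(\A')=3$, whereas the paper works uniformly over $1\le b\le m-1$ and treats the possibility ${\rm rk}(\A')=2$ separately inside the proof of the claim.
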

\begin{proof} If $m=\max\{n_1,\ldots,n_s\}$, from parts (a) and (d) above we can suppose $n-m+1\leq a\leq n-1$. Let $b:=n-a$, so $1\leq b\leq m-1$. Since ${\rm rk}(\A)=3$, then $m<n$, so $b\leq n-2$.

We will show that $I_{n-b}(\A)$ has linear graded free resolution, by induction on $n=|\A|\geq 3$.

\medskip

\noindent{\bf Base Case.} If $n=3$, then part (f) above proves the claim.

\medskip

\noindent{\bf Inductive Step.} Suppose $n>3$. Let $\ell\in \A$, and let $\A'=\A\setminus\{\ell\}$.

\noindent CLAIM 2: We have $I_{n-b}(\A):\ell=I_{n-b-1}(\A')$.

\begin{proof} If ${\rm rk}(\A')=2$, after a change of variables we can suppose $\ell=\ell_n=z$, and $\A'=\{\ell_1,\ldots,\ell_{n-1}\}\subset\mathbb K[x,y]$. Same argument as in the beginning of proof of CLAIM 1, we have $$I_{n-b}(\A):z=I_{n-b-1}(\A')=\langle x,y\rangle^{n-b-1}.$$ The last equality comes from $1\leq n-b-1\leq n-2$, and from the fact that the minimum distance of the linear code dual to $\A'$ is $n-1-1=n-2$.

Suppose ${\rm rk}(\A')=3$. Suppose $\ell$ passes through $P_1,\ldots,P_r$, and avoids $P_{r+1},\ldots,P_s$. Since ${\rm rk}(\A)=3$, then $r\geq 2$. We have $I_{n-b}(\A)\subseteq (I_{n-b}(\A))^{\rm sat}\cap \langle x,y,z\rangle^{n-b}$. Therefore

$$I_{n-b}(\A):\ell\subseteq ({\frak p_1}^{n_1-b}:\ell)\cap\cdots\cap ({\frak p_s}^{n_s-b}:\ell)\cap\underbrace{\langle x,y,z\rangle^{n-b}:\ell}_{\langle x,y,z\rangle^{n-b-1}}.$$

If $\ell\in {\frak p_j}$, then ${\frak p_j}^{n_j-b}:\ell={\frak p_j}^{n_j-b-1}$; and if $\ell\notin {\frak p_j}$, then ${\frak p_j}^{n_j-b}:\ell={\frak p_j}^{n_j-b}$.

For $i=1,\ldots, r$, there are $n_i-1$ lines of $\A'$ passing through $P_i$, and for $j=r+1,\ldots, s$, there are $n_j$ lines of $\A'$ passing through $P_j$. If for some $i\in\{1,\ldots,r\}$, $n_i=2$, then $P_i$ doesn't show up in $Sing(\A')$. But at the same time, $n_i-b-1\leq 0$, so by Lemma \ref{lemma2} we can conclude $$({\frak p_1}^{n_1-b}:\ell)\cap\cdots\cap ({\frak p_s}^{n_s-b}:\ell)={\frak p_1}^{n_1-b-1}\cap\cdots\cap {\frak p_r}^{n_r-b-1}\cap {\frak p_{r+1}}^{n_{r+1}-b}\cap\cdots\cap {\frak p_s}^{n_s-b}=(I_{n-1-b}(\A'))^{\rm sat}.$$

By inductive hypotheses and Remark \ref{remark0}, $I_{n-1-b}(\A')=(I_{n-1-b}(\A'))^{\rm sat}\cap\langle x,y,z\rangle^{n-1-b}$, so we obtained
$$I_{n-b}(\A):\ell\subseteq I_{n-1-b}(\A').$$ Since the other inclusion is obvious, the claim is shown.
\end{proof}

The equality in CLAIM 2 gives the short exact sequence:

$$0\longrightarrow R(-1)/I_{n-1-b}(\A')\longrightarrow R/I_{n-b}(\A)\longrightarrow R/\langle \ell,I_{n-b}(\A)\rangle\longrightarrow 0.$$

We can suppose $\ell=\ell_n=z$. Then $\langle \ell,I_{n-b}(\A)\rangle=\langle z,I_{n-b}(\bar{\A})\rangle$, where $\bar{\A}=(\bar{\ell_1},\ldots,\bar{\ell_{n-1}})\subset\mathbb K[x,y]$, and $\ell_i\equiv \bar{\ell_i}\mbox{ mod }z$. Some of the linear forms of $\bar{\A}$ may be proportional. Nonetheless, by \cite[Theorem 2.2]{To3}, $I_{n-b}(\bar{\A})$ has linear graded free resolution, and so does the rightmost nonzero module in the short exact sequence above.

About the leftmost nonzero module in the short exact sequence, if ${\rm rk}(\A')=2$, we saw at the beginning of the proof of CLAIM 2, that $I_{n-1-b}(\A')$ has linear graded free resolution. If ${\rm rk}(\A')=3$, then by inductive hypotheses, $I_{n-1-b}(\A')$ has linear graded free resolution. Therefore, by the strategy mentioned in the Introduction, $I_{n-b}(\A)$ also has linear graded free resolution.
\end{proof}

\section{Graded betti numbers}

\subsection{The case $k=2$.} \label{k2} First suppose $\Sigma=(\underbrace{\ell_1,\ldots,\ell_1}_{m_1},\ldots,\underbrace{\ell_t,\ldots,\ell_t}_{m_t})\subset \overline{R}:=\mathbb K[x,y]$, with $t\geq 1$, and $\gcd(\ell_i,\ell_j)=1$. Let $1\leq a\leq v:=m_1+\cdots+m_t$, and for $i=1,\ldots, t$, let $d_i:=\max\{m_i+a-v,0\}$. Then, by \cite[Proposition 2.3]{AnGaTo} and \cite[Lemma 2.1]{To3}\footnote{In the pre-published version, this lemma states that if $n_1=\max\{n_1,\ldots,n_m\}$, then, for $1\leq a\leq n-n_1$, $I_a(\Sigma)=\langle x,y\rangle^a$, and for $n\geq a\geq n-n_1+1$, $I_a(\Sigma)=\ell_1I_{a-1}(\Sigma\setminus\{\ell_1\})$.},
$$I_a(\Sigma)=\ell_1^{d_1}\cdots\ell_t^{d_t}\cdot\langle x,y\rangle^e,$$ where $e=\max\{a-(d_1+\cdots+d_t),0\}$. Then, the minimal graded free resolution of $\overline{R}/I_a(\Sigma)$ is $$0\longrightarrow \overline{R}^e(-(a+1))\longrightarrow\overline{R}^{e+1}(-a)\longrightarrow \overline{R};$$ so the graded betti numbers are $b_1(a,\Sigma)=e+1$, $b_2(a,\Sigma)=e$, and $b_3(a,\Sigma)=0$.

\subsection{The case $k=3$.} Let $\A=\{\ell_1,\ldots,\ell_n\}\subset R:=\mathbb K[x,y,z]$ be a line arrangement in $\mathbb P^2$, of rank 3. Let ${\frak m}:=\langle x,y,z\rangle$.

Suppose $Sing(\A):=\{P_1,\ldots,P_s\}\subset\mathbb P^2$ is the set of all the intersection points of the lines of $\A$. For $i=1,\ldots,s$, let ${\frak p_i}\subset R$ be the defining ideal of $P_i$; also, let $n_i=n(P_i)\geq 2$ be the number of lines of $\A$ that intersect at the point $P_i$, and let $m:=\max\{n_1,\ldots,n_s\}$.

For $1\leq a\leq n$, let

$$0\longrightarrow R^{b_3(a,\A)}(-(a+2))\longrightarrow R^{b_2(a,\A)}(-(a+1))\longrightarrow R^{b_1(a,\A)}(-a)\longrightarrow R$$ be the minimal (linear) graded free resolution of $R/I_a(\A)$.

\begin{itemize}
  \item[(i)] As we already discussed, if $1\leq a\leq n-m$, then $I_a(\A)={\frak m}^a$, and by \cite[Proposition 1.7(c)]{EiGo}, the graded betti numbers of $R/I_a(\A)$ are $$b_i(a,\A)={{a+2}\choose{a+i-1}}\cdot{{a+i-2}\choose{a-1}}, i=1,2,3.$$
  \item[(ii)] If $a=n$, then $I_n(\A)=\langle \ell_1\cdots\ell_n\rangle$, and the graded betti numbers of $R/I_n(\A)$ are
  $$b_1(n,\A)=1, b_2(n,\A)=b_3(n,\A)=0.$$
  \item[(iii)] If $a=n-1$, by \cite[Lemma 3.2]{Sc}, the graded betti numbers of $R/I_{n-1}(\A)$ are
  $$b_1(n-1,\A)=n, b_2(n-1,\A)=n-1, b_3(n,\A)=0.$$
  \item[(iv)] If $a=n-2$, \cite[Theorem 2.4]{To3} presents the minimal free resolution of $R/I_{n-2}(\A)$, and hence its graded betti numbers.
  \item[(v)] If $\A$ is generic, then $m=2$, and therefore items (i), (ii), and (iii) give the graded betti numbers of $R/I_a(\A)$ for any $1\leq a\leq n$.
\end{itemize}

\begin{rem}\label{remark_not_generic} From the itemized list above, what is left to analyze are the graded betti numbers of $R/I_a(\A)$, for $n-m+1\leq a\leq n-2$, and when $\A$ is not generic. In this case we have ${\rm ht}(I_a(\A))=2$. We also have that $R/I_a(\A)$ is not Cohen-Macaulay. To show this we use the same trick as in proof of \cite[Proposition 2.4]{To}: let $\ell_1,\ell_2,\ell_3\in\A$ be such that $\langle \ell_1,\ell_2,\ell_3\rangle={\frak m}$. Let $V(\ell)$ be some line that does not pass through $P_1,\ldots,P_s$. So $\ell$ is a nonzero divisor mod $(I_a(\A))^{\rm sat}$. Also $\ell=c_1\ell_1+c_2\ell_2+c_3\ell_3$, for some constants $c_1,c_2,c_3\in\mathbb K$.

Since $a-1\leq n-3$, there exist $\ell_{i_1},\ldots,\ell_{i_{a-1}}\in\A\setminus\{\ell_1,\ell_2,\ell_3\}$. Then
$$\ell\cdot(\ell_{i_1}\cdots\ell_{i_{a-1}})=c_1\ell_1\cdot(\ell_{i_1}\cdots\ell_{i_{a-1}})+c_2\ell_2\cdot(\ell_{i_1}\cdots\ell_{i_{a-1}})+ c_3\ell_3\cdot(\ell_{i_1}\cdots\ell_{i_{a-1}})\in I_a(\A)\subset (I_a(\A))^{\rm sat}.$$ Since $\ell$ is a nonzero divisor, we obtain $\ell_{i_1}\cdots\ell_{i_{a-1}}\in(I_a(\A))^{\rm sat}$, which is an element of degree $a-1$. So $I_a(\A)\subsetneq(I_a(\A))^{\rm sat}$.

To conclude the remark we have that in these conditions $b_3(a,\A)\geq 1$.
\end{rem}

\medskip

Let $\ell\in\A$, and consider $\A':=\A\setminus\{\ell\}$. If ${\rm rk}(\A')=3$, by Theorem \ref{main2}, $I_{a-1}(\A')$ has linear graded free resolution, CLAIM 2 in the proof of that theorem is satisfied: $$I_a(\A):\ell=I_{a-1}(\A').$$ If ${\rm rk}(\A')=2$, then CLAIM 2 is satisfied as we can see at the beginning of its proof.

As we have seen over and over again, this leads to the short exact sequence

$$0\longrightarrow R(-1)/I_{a-1}(\A')\longrightarrow R/I_a(\A)\longrightarrow R/\langle\ell, I_a(\A)\rangle\longrightarrow 0.$$

\medskip

$\bullet$ The leftmost nonzero module has minimal graded free resolution
$$0\longrightarrow R^{b_3(a-1,\A')}(-(a+2))\longrightarrow R^{b_2(a-1,\A')}(-(a+1))\longrightarrow R^{b_1(a-1,\A')}(-a)\longrightarrow R(-1).$$

\medskip

$\bullet$ Concerning the rightmost nonzero module, suppose $\ell=z$ and suppose $\ell(P_1)=\cdots=\ell(P_t)=0$ and $\ell(P_j)\neq 0, j=t+1,\ldots,s$. Then $$\langle \ell,I_a(\A)\rangle=\langle z, I_a(\overline{\A})\rangle,$$ where $\displaystyle\overline{\A}:= (\underbrace{\bar{\ell_1},\ldots,\bar{\ell_1}}_{n_1-1},\ldots,\underbrace{\bar{\ell_t},\ldots,\bar{\ell_t}}_{n_t-1})\subset\overline{R}:=\mathbb K[x,y]$. Then, the rightmost nonzero module has minimal free resolution
$$0\longrightarrow R^{b_2(a,\overline{\A})}(-(a+2))\longrightarrow R^{b_1(a,\overline{\A})+b_2(a,\overline{\A})}(-(a+1))\longrightarrow R(-1)\oplus R^{b_1(a,\overline{\A})}(-a)\longrightarrow R,$$ where $b_1(a,\overline{\A})$ and $b_2(a,\overline{\A})$ are determined according to Subsection \ref{k2}.

\medskip

Applying {\em mapping cone} technique, after appropriate cancellations (or by using loosely \cite[Lemma 1.13]{EiGo}), we obtain
\begin{itemize}
  \item[(A)] $b_3(a,\A)=b_3(a-1,\A')+b_2(a,\overline{\A})$.
  \item[(B)] $b_2(a,\A)=b_2(a-1,\A')+b_2(a,\overline{\A})+b_1(a,\overline{\A})$.
  \item[(C)] $b_1(a,\A)=b_1(a-1,\A')+b_1(a,\overline{\A})$.
\end{itemize}

\begin{exm} Consider the line arrangement $\A=\{x,x-z,x+z,z,y,y-z\}$. We have $n=6$ and $m=4$, so we are left to analyze the graded betti numbers when $a=3$, and $a=4$.

To be as explicit as possible, for each of these two case we will pick different sets of lines that will be recursively deleted.

\medskip

\noindent$\boxed{a=3}$

$\bullet$ Take $\ell=y$. Then $\A'=\{x,x-z,x+z,z,y-z\}$, and $\overline{\A}\approx(z,z,x+z,x,x-z)$ with $b_1(3,\overline{\A})=4$, $b_2(3,\overline{\A})=3$. From (A), (B), (C) we have
\begin{eqnarray}
b_3(3,\A)&=&b_3(2,\A')+3\nonumber\\
b_2(3,\A)&=&b_2(2,\A')+3+4\nonumber\\
b_1(3,\A)&=&b_1(2,\A')+4.\nonumber
\end{eqnarray}

$\bullet$ Take $\ell'=y-z$. Then $\A'':=\A'\setminus\{\ell'\}=\{x,x-z,x+z,z\}$, and $\overline{\A'}\approx(x,x+y,x-y,y)$. Then we have $b_1(1,\A'')=2$, $b_2(1,\A'')=1$, $b_3(1,\A'')=0$, and $b_1(2,\overline{\A'})=3$, $b_2(2,\overline{\A'})=2$. From (A), (B), (C) we have

\begin{eqnarray}
b_3(2,\A')&=&0+2=2\nonumber\\
b_2(2,\A')&=&1+2+3=6\nonumber\\
b_1(2,\A')&=&2+3=5,\nonumber
\end{eqnarray} leading to $$b_1(3,\A)=9,\, b_2(3,\A)=13,\, b_3(3,\A)=5.$$

\medskip

\noindent$\boxed{a=4}$

$\bullet$ Take $\ell=z$. Then $\A'=\{x,x-z,x+z,y-z,y\}$, and $\overline{\A}\approx(x,x,x,y,y)$ with $b_1(4,\overline{\A})=2$, $b_2(4,\overline{\A})=1$. From (A), (B), (C) we have
\begin{eqnarray}
b_3(4,\A)&=&b_3(3,\A')+1\nonumber\\
b_2(4,\A)&=&b_2(3,\A')+1+2\nonumber\\
b_1(4,\A)&=&b_1(3,\A')+2.\nonumber
\end{eqnarray}

$\bullet$ Take $\ell'=x$. Then $\A'':=\A'\setminus\{\ell'\}=\{x-z,x+z,y-z,y\}$, and $\overline{\A'}\approx(z,z,y-z,y)$. We have that $\A''$ is generic, with $1\leq 2\leq 4-2$, so from (i) above we have $b_1(2,\A'')=6$, $b_2(2,\A'')=8$, $b_3(2,\A'')=3$. Also we have $b_1(3,\overline{\A'})=3$, $b_2(3,\overline{\A'})=2$. From (A), (B), (C) we have

\begin{eqnarray}
b_3(3,\A')&=&3+2=5\nonumber\\
b_2(3,\A')&=&8+2+3=13\nonumber\\
b_1(3,\A')&=&6+3=9,\nonumber
\end{eqnarray} leading to $$b_1(4,\A)=11,\, b_2(4,\A)=16,\, b_3(4,\A)=6.$$ Observe that we obtained, as expected, the same graded betti numbers in \cite[Theorem 2.4]{To3} (with the notations of that theorem, we have $n=6, a=n-2=4, m=15, p(\A)=3+1=4$).
\end{exm}


\bigskip

\renewcommand{\baselinestretch}{1.0}
\small\normalsize 

\bibliographystyle{amsalpha}

\end{document}